\theoremstyle{thmstyleone}%
\newtheorem{theorem}{Theorem}%  meant for continuous numbers
\theoremstyle{thmstyletwo}%
\newtheorem{remark}{Remark}%
\newtheorem{lemma}{Lemma}
\theoremstyle{thmstylethree}%
\newtheorem{definition}{Definition}%
\begin{document}

\title[On the steadiness of symmetric solutions to two dimensional dispersive models]{On the steadiness of symmetric solutions to two dimensional dispersive models}

%%=============================================================%%
%% Prefix	-> \pfx{Dr}
%% GivenName	-> \fnm{Joergen W.}
%% Particle	-> \spfx{van der} -> surname prefix
%% FamilyName	-> \sur{Ploeg}
%% Suffix	-> \sfx{IV}
%% NatureName	-> \tanm{Poet Laureate} -> Title after name
%% Degrees	-> \dgr{MSc, PhD}
%% \author*[1,2]{\pfx{Dr} \fnm{Joergen W.} \spfx{van der} \sur{Ploeg} \sfx{IV} \tanm{Poet Laureate} 
%%                 \dgr{MSc, PhD}}\email{iauthor@gmail.com}
%%=============================================================%%

\author*[1]{\fnm{Long} \sur{Pei}}\email{peilong@mail.sysu.edu.cn}
%\equalcont{These authors contributed equally to this work.}

\author[1]{\fnm{Fengyang} \sur{Xiao}}\email{xiaofy5@mail2.sysu.edu.cn}
%\equalcont{These authors contributed equally to this work.}

\author[1]{\fnm{Pan} \sur{Zhang}}\email{zhangp273@mail2.sysu.edu.cn}
%\equalcont{These authors contributed equally to this work.}

%\affil[1]{\orgdiv{Department of Data Science and Knowledge Discovery}, \orgname{Simula Metropolitan Center for Digital Engineering}, \orgaddress{\city{Oslo}, \postcode{0167}, \country{Norway}}}

\affil*[1]{\orgdiv{School of Mathematics (Zhuhai)}, \orgname{Sun Yat-sen University}, \orgaddress{\city{Zhuhai}, \postcode{519082}, \state{Guangdong}, \country{China}}}
%%==================================%%
%% sample for unstructured abstract %%
%%==================================%%

\abstract{In this paper, we consider the steadiness of symmetric solutions to two dispersive models in shallow water and hyperelastic mechanics, respectively. These models are derived previously in the two-dimensional setting and can be viewed as the generalization of the Camassa-Holm and Kadomtsev-Petviashvili equations. For these two models, we prove that symmetry of classical solutions  implies  steadiness in the horizontal direction.  We also confirm the such connection between symmetry and steadiness in weak formulation, which includes in particular the peaked solutions.}

%%================================%%
%% Sample for structured abstract %%
%%================================%%

\keywords{Symmetry, steady solutions,  Camassa-Holm, Kadomtsev-Petviashvili, traveling waves, weak solutions}

%%\pacs[JEL Classification]{D8, H51}

%%\pacs[MSC Classification]{35A01, 65L10, 65L12, 65L20, 65L70}

\maketitle

\section{Introduction}\label{sec1}

Of main concern in this paper is the  Camassa-Holm-Kadomtsev-Petviashvili (CH-KP) equation, proposed in \cite{Gui2021} for  two-dimensional shallow-water waves propagating over a flat bed.  This equation has the form
\begin{equation}\label{eq:chkp}
	u_{xt}-\frac{5}{12} \gamma u_{xxxt} + u_{xx}+\frac{3}{2} \varepsilon (u u_x)_x -\frac{1}{4} \gamma u_{xxxx} -\frac{5}{24} \gamma \varepsilon (2u_x u_{xx}+u u_{xxx})_x+\frac{1}{2} \varepsilon^3 u_{yy}=0,
\end{equation}
where  $\varepsilon$ and $\gamma$ are small parameters that measure the nonlinearity and dispersion, respectively. It characterizes the weak transversal effects in two-dimensional flows, as the Kadomtsev-Petviashvili  equation \cite{kadomtsev1970stability} does, but exhibits stronger nonlinearity. It is worth to mention that  \eqref{eq:chkp} is closely related to the  following nonlinear dispersive equation

\begin{equation}\label{eq:hcp}
	(u_{t}-u_{xxt}+3uu_{x}-\gamma(2u_{x}u_{xx}+uu_{xxx}))_{x}-\alpha u_{yy}+\beta u_{xxyy}=0,
\end{equation}
where $\alpha,\beta,\gamma\in[0,\infty)$ are material parameters. 
However, the equation \eqref{eq:hcp} is originally derived by Chen  \cite{MR2273493}  to model the deformation  of a hyperelastic compressible plate relative to a uniformly pre-stressed state. 

\medskip

The model \eqref{eq:chkp} is derived within the Camassa-Holm (CH)  regime, and can  be viewed as a two-dimensional Camassa-Holm equation \cite{Fuchssteiner1981,MR1234453}. It is then expected that the solution to \eqref{eq:chkp} preserves some properties as that to the latter. For Cauchy problem of  \eqref{eq:chkp}, local well-posedness is confirmed in  Sobolev-type space $X^{s}$, $s\geq 2$, which is clearly introduced in Remark \ref{X^s}, while the full gradient  and the $x$-derivative of the solution blow up within finite time \cite{Gui2021} (see also \cite{Ming2023}). Moreover, \eqref{eq:chkp} allows for smooth and peaked traveling solutions \cite{Gui2021,Moon2022}. This is in line with the results for the Camassa-Holm equation, which allows for both smooth \cite{LENELLS2005393} and peaked solitary waves \cite{Alber1994,Camassa1993}, the latter being confirmed to be orbitally stable in $H^{1}(\mathbb{R})$ \cite{Constantin2000}. It is worth to mention that the Camassa-Holm equation is  well-posed in the critical Besov space $B^{\frac{2}{3}}_{2,1}$, but presents the wave-breaking phenomenon for smooth, sufficiently asymmetric initial data (see  \cite{Danchin2003,Constantin2000b} and references therein.)

\medskip

In this paper, we consider the steadiness of symmetric solutions to \eqref{eq:chkp} and its formal generalization \eqref{eq:hcp}. The symmetry of traveling solution has been a topic of great concern in water wave problems long time ago. It is known that symmetry holds \emph{a priori} for  irrotational flows \cite{Garabedian1965} and flows with vorticity \cite{Constantin2004a} if the global or local monotone structure is imposed (see also \cite{Okamoto2001,MikyoungHur2007} and references therein). Recently,  periodic traveling solutions to a large family of weakly dispersive equations are also confirmed in \cite{Bruell2023} to be  symmetric when reflection criterion holds. This criterion does not impose monotone structure at any point in a period. A precise characterization of the connection between symmetry and steadiness is then an intriguing topic. The first study in this direction is due to \cite{Ehrnstroem2009}, where a general principle for symmetric solutions to be steady was put forward for a large family of evolution equations. This work is further generalized in \cite{Bruell2017}, where the steadiness of symmetric solutions is thoroughly studied for general evolution equations in higher dimensions and differential systems via three principles. Recently, the principle in  \cite{Ehrnstroem2009} was applied to confirm the steadiness of symmetric solutions to unidirectional shallow water models, like the Whitham equation \cite{Bruell2017},  Degasperis-Procesi equation \cite{Pei2023}, Rotation-Camassa-Holm equation \cite{Wang2021} and some highly nonlinear one-dimensional shallow water models with moderate  and large amplitude \cite{Geyer2015,MR4564051}. In particular,  an alternative argument is put forward in \cite{Pei2023} to uncover more precise information about the connection between symmetry and steadiness of solutions. 

\medskip

{Note that the models of concern in this paper are two-dimensional, but we focus on solutions which are symmetric with respect to the $x$-variable. For smooth  solutions with symmetry in the $x$-direction, we adjust the argument in \cite{Bruell2017} so that it applies in the two dimension setting. For solutions in weak formulation,  we generalize the argument in \cite{Ehrnstroem2009, Geyer2015} from one dimension to two dimensions, and confirm that weak solutions with symmetry in the $x$-direction are also steady solutions in the $x$-direction. These weak solutions with symmetry in the $x$-direction correspond to but are more general than the peaked solitary waves considered in \cite{Gui2021}. 

\medskip 

We conclude this section with the frame of this paper. In section \ref{sect:preliminary}, we first introduce functions with  symmetry and steadiness in the $x$-direction. Then, we prove that smooth, $x$-symmetric solutions are actually traveling waves in the $x$-direction for the CH-KP equation in Sect. \ref{sect:chkp} and for the hyperelastic compressible plate model  \eqref{eq:hcp} in Sect. \ref{sect:hcp}. In Sect. 5, we introduce the weak formulation of solutions, and prove that $x$-symmetric weak solutions are  steady in the $x$-direction. 

\medskip

\section{Preliminary}\label{sect:preliminary}
Let $I$ be an interval of existence of a given equation, which is usually taken as $[0,T]$ with $T>0$. We first introduce the concept of a function $f(t,x,y)$ with $(t,x,y)\in I\times\mathbb{R}^{2}$ to be symmetric with respect to the variable $x$. 

\medskip

\begin{definition}\label{def:x-symmetry}
	We say a function $f(t,x,y)$, $(t,x,y)\in I\times\mathbb{R}^{2}$, is  $x$-symmetric  if there exists a function $\lambda(t) \in C^{1}(\mathbb{R})$ such that
	\begin{equation}
		f(t,x,y)=f(t,2\lambda(t)-x,y) \nonumber
	\end{equation}
	for  $(t,x,y)\in I\times\mathbb{R}^{2}$. We call $\lambda=\lambda(t)$  the \emph{axis of symmetry}.
\end{definition}
%For convenience,  we denote $f_{\lambda}(t,x,y):=f(t,2\lambda(t)-x,y)$, and use the notation $f_{\lambda}$ in short. 
We also need the following concept of a function $f(t,x,y)$ with $(t,x,y)\in I\times\mathbb{R}^{2}$ to be steady in the $x$-direction. 

\medskip 

\begin{definition}
	We say a function $f(t,x,y)$, $(t,x,y)\in I\times\mathbb{R}^{2}$, is  steady in the  $x$-direction if there exists a function $g$ such that    
	\begin{equation}
		f(t,x,y)=g(x-ct,y) \nonumber
	\end{equation}
	for  $(t,x,y) \in I\times\mathbb{R}^{2}$.
\end{definition}

We first reformulate \eqref{eq:chkp} in a simpler form as in \cite{Gui2021}.  In view of the scale transformation 
\begin{align}
	t &\rightarrow \sqrt{\frac{5}{12}} \gamma t , \quad x \rightarrow  \sqrt{\frac{5}{12}} \gamma x,\quad y \rightarrow \varepsilon^{-\frac{3}{2}}\sqrt{\frac{5}{6} \gamma}y,\notag \\
	u(t,x,y) &\rightarrow \frac{1}{2} \varepsilon u(t,x-(\frac{3}{4}+\kappa)t,y)+\frac{1}{4},\notag
\end{align}
we can 
rewrite \eqref{eq:chkp} in the following form
\begin{equation}\label{eq:chkp reformulation}
	Lu_t+\kappa u_{xx}+(3uu_x-2u_xu_{xx}-uu_{xxx})_x+u_{yy}=0,
\end{equation}
where $\kappa \in \mathbb{R}$ and the operator $L$ is defined by $Lf:=\partial_x(1-\partial_x^2)f$.

\section{Steadiness of $x$-symmetric solutions to  Camassa-Holm-Kadomtsev-Petviashvili equation}\label{sect:chkp}

In this section, we prove that  classical solutions to the Camassa-Holm-Kadomtsev-Petviashvili equation \eqref{eq:chkp reformulation} with symmetry in $x$-direction are steady in the $x$-direction. The precise formulation of the result is as follows.

\medskip 

\begin{theorem}\label{thm:chkp}
	Assume that the Camassa-Holm-Kadomtsev-Petviashvili equation \eqref{eq:chkp reformulation} has a unique classical solution $u(t,x,y)$, $(t,x,y)\in I\times\mathbb{R}^{2}$, for given initial data $u_{0}(x,y)=u(0,x,y)$. If $u(t,x,y)$ is $x$-symmetric, then it is a steady solution in the $x$-direction with speed $\dot{\lambda}(0)$. 
\end{theorem}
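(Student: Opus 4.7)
My plan is to reduce the theorem to two ingredients: an algebraic transport identity $u_t + \dot\lambda(t)\,u_x \equiv 0$, derived purely from the $x$-symmetry of $u$ and the dispersive structure of \eqref{eq:chkp reformulation}, together with the assumed uniqueness of the classical solution, which then forces $u$ to coincide with the explicit translate $\bar{u}(t,x,y):=u_0(x-\dot\lambda(0)\,t,y)$.

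First I would record, by differentiating $u(t,x,y)=u(t,2\lambda(t)-x,y)$ in $x$, that $\partial_x^{2k}u$ is even and $\partial_x^{2k+1}u$ is odd about $x=\lambda(t)$, while $\partial_y^{j}u$ inherits the parity of $u$ itself. Differentiating the symmetry identity once in $t$ and combining with the oddness of $u_x$ about $\lambda(t)$ yields that
\[
v(t,x,y)\;:=\;u_t(t,x,y)+\dot\lambda(t)\,u_x(t,x,y)
\]
is even in $x$ about $\lambda(t)$; consequently $Lv=v_x-v_{xxx}$ is odd about $\lambda(t)$. On the other hand, solving \eqref{eq:chkp reformulation} for $Lu_t$ and adding $\dot\lambda(t)\,Lu_x$ gives
\[
Lv\;=\;-\kappa u_{xx}-(3uu_x-2u_xu_{xx}-uu_{xxx})_x-u_{yy}+\dot\lambda(t)\bigl(u_{xx}-u_{xxxx}\bigr).
\]
A termwise parity check shows that every summand on the right is \emph{even} about $\lambda(t)$ --- the key point being that $3uu_x-2u_xu_{xx}-uu_{xxx}$ is odd, so its $x$-derivative is even. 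Since $Lv$ is then both even and odd, it vanishes identically.

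The identity $Lv\equiv 0$ reads $\partial_x(v-v_{xx})\equiv 0$, so $v-v_{xx}=h(t,y)$ for some $h$. Invoking the decay of $u$ and its derivatives in $x$ provided by the $X^s$-framework (cf.\ Remark~\ref{X^s}) excludes first the constant-in-$x$ residue $h$, and then the $e^{\pm x}$ modes in the kernel of $1-\partial_x^2$, so that $v\equiv 0$. This is the transport identity $u_t=-\dot\lambda(t)\,u_x$; evaluating at $t=0$ gives in particular $u_t(0,\cdot,\cdot)=-\dot\lambda(0)\,u_{0,x}$. Substituting this into \eqref{eq:chkp reformulation} at $t=0$ produces the $t$-independent profile identity
\[
-\dot\lambda(0)\,Lu_{0,x}+\kappa u_{0,xx}+(3u_0u_{0,x}-2u_{0,x}u_{0,xx}-u_0u_{0,xxx})_x+u_{0,yy}=0.
\]
Because $L$ and the nonlinearity are translation-invariant in $x$, the same identity evaluated at $(x-\dot\lambda(0)\,t,y)$ shows that $\bar u(t,x,y):=u_0(x-\dot\lambda(0)\,t,y)$ is a classical solution of \eqref{eq:chkp reformulation} with initial datum $u_0$. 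Uniqueness then forces $u\equiv\bar u$, giving the claimed steadiness with speed $\dot\lambda(0)$.

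I expect the main technical obstacle to be the passage from $Lv\equiv 0$ to $v\equiv 0$: one must carefully rule out both the constant-in-$x$ residue $h(t,y)$ and the $e^{\pm x}$ kernel of $1-\partial_x^2$, which relies on the decay of $u$ and its first few $x$-derivatives in the $X^s$-setting. The remaining steps are comparatively routine: the parity bookkeeping reduces to counting $x$-derivatives in each nonlinear term, and the uniqueness comparison with $\bar u$ uses only the $x$-translation invariance of \eqref{eq:chkp reformulation}.
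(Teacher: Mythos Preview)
Your proposal is correct and follows essentially the same strategy as the paper: exploit the $x$-symmetry to show $L(u_t+\dot\lambda u_x)=0$, then build the traveling-wave candidate and invoke uniqueness. Your parity bookkeeping (show $Lv$ is simultaneously odd and even about $x=\lambda(t)$) is just a repackaging of the paper's device of inserting the symmetry relations into the equation and comparing with the original to obtain the pair $L(u_t+\dot\lambda u_x)=0$ and $-\dot\lambda Lu_x+\kappa u_{xx}+(\ldots)_x+u_{yy}=0$.

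The one organizational difference worth flagging: the paper uses the second identity at \emph{two} time levels $t_1,t_2$ with $x_i-\lambda(t_i)$ equal, subtracts, and infers $\dot\lambda$ is constant before defining the candidate; you instead invert $L$ (using $X^s$-decay to kill the constant and the $e^{\pm x}$ modes), obtain the transport identity $u_t+\dot\lambda u_x=0$ outright, evaluate only at $t=0$, and go straight to the comparison with $\bar u(t,x,y)=u_0(x-\dot\lambda(0)t,y)$. Your route is a bit more direct and makes explicit the decay step that the paper glosses over when it passes from $L(u_t+\dot\lambda u_x)=0$ to ``solutions of the form $g(x-\lambda(t),y)$''; the paper's route, on the other hand, does not need to invert $L$ at all, since constancy of $\dot\lambda$ follows from the \emph{second} equation alone once the ansatz $u=g(x-\lambda(t),y)$ is in hand. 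Either way the uniqueness endgame is identical.
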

\begin{proof}
	Assume that the CH-KP equation \eqref{eq:chkp reformulation}
	has a symmetric solution with respect to the component $x$, which satisfies $u(t,x,y)=u(t,2\lambda(t)-x,y)$ for some function $\lambda(t) \in C^1(\mathbb{R})$. Then we have
	\begin{align}
		u_{t}(t,x,y)&=(u_{t}+2 \dot{\lambda} u_{x})(t,2\lambda-x,y),\label{pt}\\
		\partial_{x}^{n}u(t,x,y)&=(-1)^{n}\partial_{x}^{n}u(t,2\lambda-x,y),\quad n \in \mathbb{Z}^+. \label{px}
	\end{align}
	Inserting \eqref{pt} and \eqref{px} into \eqref{eq:chkp reformulation}
	and in view of the arbitrariness of  $(t,x,y)\in I\times\mathbb{R}^{2}$, we get
	\begin{equation}\label{eq:Transformed chkp}
		-L(u_t+2\dot{\lambda} u_x) +\kappa u_{xx}-(-3uu_x+2u_xu_{xx}+uu_{xxx})_x+u_{yy}=0.
	\end{equation}
	Comparing \eqref{eq:chkp reformulation} with \eqref{eq:Transformed chkp}, we obtain the following equations
	\begin{align}
		L(u_t+\dot{\lambda} u_x)&=0,\label{eq:shape of wave}\\
		-L(\dot{\lambda} u_x) +\kappa u_{xx}+(3uu_x-2u_xu_{xx}-uu_{xxx})_x+u_{yy}&=0.\label{eq:velocity of wave}
	\end{align}
	It is clear that \eqref{eq:shape of wave} is a linear equation and allows for solutions of the form 
	\begin{equation}\label{solution with fixed shape}
		u(t,x,y)=g(x-\lambda(t),y)
	\end{equation}
	for some function $g$. We now choose arbitrarily  two points  $(t_1,x_1,y),(t_2,x_2,y) \in I \times \mathbb{R}^{2}$ 
	such that
	$$
	x_1-\lambda(t_1)=x_2-\lambda(t_2)=:X .
	$$
	Inserting \eqref{solution with fixed shape} into \eqref{eq:velocity of wave}, and evaluating the latter at $(t_1,x_1,y)$ and $(t_2,x_2,y)$, respectively, we get
	\begin{align}
		& L(\dot{\lambda}(t_1)g_{x}(X,y))-M(X,y)=0,\label{eq:pair xt1}\\
		& L(\dot{\lambda}(t_2)g_{x}(X,y))-M(X,y)=0,\label{eq:pair xt2}
	\end{align}
	where $M(X,y):=(\kappa g_{xx}+(3g g_{x}-2g_{x}g_{xx}-gg_{xxx})_x+g_{yy})(X,y)$.
	Subtracting \eqref{eq:pair xt1} by \eqref{eq:pair xt2}  gives
	$$
	(\dot{\lambda}(t_1)-\dot{\lambda}(t_2))Lg_{x}(X,y)=0.
	$$
	Since $(X,y)$ can be taken arbitrarily, we have $Lg_{x}(X,y) \not\equiv 0$ so that $\dot{\lambda}(t)$ is a constant for any $t \in I$. Therefore, the solution in \eqref{solution with fixed shape} takes the form $u(t,x,y)=g(x-ct,y)$ for some constant $c\in \mathbb{R}$. Now, we take $g$ to be $u_{0}$ and define  $v(t,x,y)=u_{0}(x-ct,y)$. It is clear that $v(t,x,y)$ satisfies \eqref{eq:shape of wave} and \eqref{eq:velocity of wave}. Hence, it satisfies the CH-KP equation \eqref{eq:chkp reformulation}, and therefore is a solution. In addition, we have $v(0,x,y)=u_{0}(x,y)$ so that the initial condition is satisfied. The uniqueness of the solution then implies that $u(t,x,y)= v(t,x,y)=u_{0}(x-ct,y)$, which implies that the $x$-symmetric solution $u(t,x,y)$ is a traveling wave with respect to the $x$-variable.
	Therefore, any classical $x$-symmetric solution of the CH-KP equation is steady in the $x$-direction.
\end{proof}

\begin{remark}\label{X^s}
	Note that local well-posedness of \eqref{eq:chkp reformulation} has been confirmed in \cite{Gui2021}  in Sobolev type spaces $X^{s}$, $s\geq2$, which is defined by 
	\begin{equation}
		X^{s}=X^{s}(\mathbb{R}^{2}):=\{u\in H^{s}(\mathbb{R}^{2})| \partial_{x}u\in H^{s}(\mathbb{R}^{2}),  \partial^{-1}_{x}u\in H^{s}(\mathbb{R}^{2})\}. \nonumber
	\end{equation}
	Therefore, the uniqueness assumption of classical solutions in Theorem \ref{thm:chkp} is guaranteed when the $x$-symmetric classical solution is taken from the space $C(I;X^{s}(\mathbb{R}^{2}) \cap C^{1}(I;X^{s-2}(\mathbb{R}^{2}))$, $s\geq2$.
\end{remark}

\section{Steadiness of $x$-symmetric solutions to the hyperelastic compressible plate model}\label{sect:hcp}

We now consider the steadiness of $x$-symmetric solutions to the hyperelastic compressible plate model \eqref{eq:hcp}. It is intriguing to know that the higher order dispersive term $u_{xxyy}$ may change the integrable structure of the equation, therefore affecting the existence of solitons as special steady solutions, but all $x$-symmetric solutions are still steady in the $x$-direction. This is precisely stated in the following theorem. 

\medskip 

\begin{theorem}\label{thm:hcp}
	Assume that \eqref{eq:hcp} has a unique classical solution $u(t,x,y)$, $(t,x,y)\in I\times\mathbb{R}^{2}$, for given initial data $u_{0}(x,y)=u(0,x,y)$. If $u(t,x,y)$ is $x$-symmetric, then it is a steady solution in the $x$-direction with speed $\dot{\lambda}(0)$.
\end{theorem}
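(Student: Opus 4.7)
The plan is to mirror the proof of Theorem \ref{thm:chkp}, exploiting the structural parallel between \eqref{eq:chkp reformulation} and \eqref{eq:hcp}. First I would expand the outer $\partial_x$ in \eqref{eq:hcp} and rewrite the equation as
\begin{equation}
    L u_t + \bigl(3uu_x - \gamma(2u_x u_{xx} + uu_{xxx})\bigr)_x - \alpha u_{yy} + \beta u_{xxyy} = 0,
\end{equation}
with $L = \partial_x(1-\partial_x^2)$ as before. Compared with \eqref{eq:chkp reformulation}, the only genuinely new ingredient is the fourth-order dispersive term $\beta u_{xxyy}$; the absence of a $\kappa u_{xx}$ term and the sign in front of $u_{yy}$ are cosmetic.

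Next, assuming $u(t,x,y) = u(t, 2\lambda(t)-x, y)$ for some $\lambda \in C^{1}(\mathbb{R})$, I would apply the transformation formulas \eqref{pt}--\eqref{px}. The crucial parity observation is that $u_{yy}$, $u_{xxyy}$, and every monomial appearing in $\bigl(3uu_x - \gamma(2u_x u_{xx} + uu_{xxx})\bigr)_x$ contains an even total number of $x$-derivatives, and is therefore invariant under the reflection $x \mapsto 2\lambda(t)-x$. Only $Lu_t$ produces a cross-term involving $\dot{\lambda}$, exactly as in the CH-KP case. Inserting the transformation rules and invoking the arbitrariness of $(t,x,y)$ should then yield
\begin{equation}
    -L(u_t + 2\dot{\lambda} u_x) + \bigl(3uu_x - \gamma(2u_x u_{xx} + uu_{xxx})\bigr)_x - \alpha u_{yy} + \beta u_{xxyy} = 0.
\end{equation}

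Subtracting this transformed equation from the original gives the shape equation $L(u_t + \dot{\lambda} u_x) = 0$, while summing them produces a velocity equation of the form $-L(\dot{\lambda} u_x) + N_x - \alpha u_{yy} + \beta u_{xxyy} = 0$, where $N := 3uu_x - \gamma(2u_x u_{xx} + uu_{xxx})$. From the shape equation I would posit $u(t,x,y) = g(x - \lambda(t), y)$, substitute into the velocity equation, and evaluate at two space-time points $(t_i, x_i, y)$ with $x_i - \lambda(t_i) = X$ fixed. This should give $(\dot{\lambda}(t_1) - \dot{\lambda}(t_2))\, L g_x(X, y) = 0$, forcing $\dot{\lambda} \equiv c := \dot{\lambda}(0)$. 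The candidate $v(t,x,y) := u_0(x - ct, y)$ then satisfies \eqref{eq:hcp} with the correct initial datum, and the uniqueness assumption yields $u = v$.

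The main obstacle is purely algebraic bookkeeping: one must verify that $\beta u_{xxyy}$ is genuinely inert under the reflection so that the add/subtract step cleanly isolates the shape and velocity components. Since this term shares its even $x$-parity with $u_{yy}$, no new analytic difficulty is introduced, and the algebraic split used for CH-KP transfers essentially verbatim.
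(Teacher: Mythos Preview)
Your proposal is correct and follows essentially the same route as the paper's own proof: the parity check showing that $\beta u_{xxyy}$ (and every other nonlinear/dispersive term) is even in $x$, the add/subtract split into the shape equation $L(u_t+\dot\lambda u_x)=0$ and the velocity equation $-\dot\lambda\,Lu_x+N_x-\alpha u_{yy}+\beta u_{xxyy}=0$, and the two-point evaluation argument all match the paper's sketch verbatim. The only difference is that you spell out the add/subtract step and the parity bookkeeping explicitly, whereas the paper simply records the resulting pair of equations and refers back to the CH-KP proof.
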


\begin{proof}
	We will sketch the proof since the idea is similar to that in the proof for Theorem \ref{thm:chkp}. Assume that  \eqref{eq:hcp} has an $x$-symmetric solution, that is $u(t,x,y)=u(t,2\lambda-x,y)$ for some differentiable function $\lambda(t)$, $t\in I$. Then, we observe that $u$ satisfies  
	\begin{align}
		L(u_t+\dot{\lambda} u_x)&=0,\label{eq:shape of wave for hcp}\\
		-\dot{\lambda}L u_x + (3uu_x-\gamma(2u_xu_{xx}+uu_{xxx}))_x-\alpha u_{yy} +\beta u_{xxyy}&=0.\label{eq:velocity of wave for hcp}
	\end{align}
	In particular, \eqref{eq:shape of wave for hcp} indicates the existence of solutions with the special form $u(t,x,y)=h(x-\lambda(t),y)$ for some function $h$.
	We now choose arbitrarily  two points  $(t_1,x_1,y),(t_2,x_2,y) \in I \times \mathbb{R}^{2}$ 
	such that
	$
	x_1-\lambda(t_1)=x_2-\lambda(t_2)=:X.
	$
	Inserting the solution $h(x-\lambda(t),y)$ into  \eqref{eq:velocity of wave for hcp}, and evaluating it at   $(t_1,x_1,y)$ and $(t_2,x_2,y)$, we get
	$$
	(\dot{\lambda}(t_1)-\dot{\lambda}(t_2))Lh_{x}(X,y)=0.
	$$
	In view of the arbitrariness of $(X,y)\in \mathbb{R}^{2}$, we conclude that  $\dot{\lambda}(t)=c$, $t \in I$, for some constant $c\in\mathbb{R}$. Now, we take the function $h$ to be $u_{0}$ and define  $v(t,x,y)=u_{0}(x-ct,y)$. Then  $v(t,x,y)$ satisfies the evolution equation \eqref{eq:hcp} and the initial condition is satisfied. The uniqueness of the solution implies that $u(t,x,y)=v(t,x,y)=u_{0}(x-ct,y)$, and therefore $u(t,x,y)$ is steady in the $x$-direction.
	Hence, any classical $x$-symmetric solution to the hyperelastic compressible plate model \eqref{eq:hcp} is steady in the $x$-direction.
\end{proof}

\begin{remark}
	The existence and uniqueness of solutions to  \eqref{eq:hcp} have been confirmed in \cite{MR2463973}  in Bourgain type space $X^{b,0}$, for some $b>\frac{1}{2}$, which is defined by 
	\begin{equation}
		\|u\|_{X^{b,0}}=\|\left\langle\tau-p(\xi,\eta)\right\rangle^{b}\left\langle\xi\right\rangle^{2}\hat{u}(\tau,\xi,\eta)\|_{L^{2}_{\tau\xi\eta}} , \nonumber
	\end{equation}
	where 
	\begin{equation}
		p(\xi,\eta)=\frac{\alpha\xi^{-1}\eta^{2}+\beta\xi\eta^{2}}{1+\xi^{2}}, \quad \alpha,\beta\in[0,\infty).\nonumber
	\end{equation}
	Therefore, the uniqueness assumption of classical solutions in Theorem \ref{thm:hcp} is guaranteed when the $x$-symmetric classical solution is taken from the space $X^{b,s}$ with $s\geq 0$ and $b>\frac{1}{2}$.
\end{remark}

\section{Steadiness of $x$-symmetric solutions in weak formulation}
The existence of peaked solitary waves, like peakons, is an important feature of the interaction between  relatively weak dispersion and strong nonlinearity. Moreover, the existence of smooth solitary waves is excluded for some dispersive models, like the KP-II equation \cite{MR2273493}. Therefore, exploring the existence and properties of traveling wave solutions in weak formulation are both of significance for the CH-KP equation and with challenge due to the existence of slow transverse effects that can cause the direction of the wave solutions to be deflected or the shape of the wave to change. A criterion for the existence of peaked solitary wave solutions to the CH-KP equation is confirmed in \cite{Gui2021}. These peaked solitary wave solutions are weak, steady solutions  in both the $x$ and $y$ directions  of the form 
\begin{equation}\label{eq: specific peak solitary solution of CH-KP}
	u(t,x,y)=a e^{-|x+\beta y -ct|}, \quad c\in \mathbb{R}.
\end{equation}
In this section, we consider the steadiness  of weak solutions with symmetry in the horizontal direction for both the CH-KP equation and the hyperelastic compressible plate model. We first introduce the weak formulation of solutions to the CH-KP equation. 

\medskip 

\begin{definition}\label{def:weak}
A function $u \in C(I,H^{1}(\mathbb{R}^{2}))$ is called a weak solution of \eqref{eq:chkp reformulation} if $u $ satisfies
\begin{equation}\label{def:chkp weak solution}
	\iiint_{I \times \mathbb{R}^2} u L(\partial_t\phi) +(\kappa u + \frac{3}{2} u^{2} +\frac{1}{2} (\partial_x u)^{2}) \partial_x^2\phi - \frac{1}{2} u^{2} \partial_x^4 \phi+u \partial_y^2\phi \, \mathrm{d}t\mathrm{d}x\mathrm{d}y=0
\end{equation}
for all $\phi \in C^{\infty}_{0}(I \times \mathbb{R}^{2})$.
\end{definition}
In order to characterize the steadiness of the weak solutions, we give the following lemma.
\begin{lemma}\label{chkp-lemma}
    If $U(x,y)\in  H^{1}(\mathbb{R}^{2}) $ satisfies
    \begin{equation}\label{equation lemma}
        \iint_{\mathbb{R} \times \mathbb{R}} -c U L(  \partial_x \psi) +(\kappa U + \frac{3}{2} U^{2} +\frac{1}{2} (\partial _{x}U)^{2})\partial^2_{x} \psi -\frac{1}{2} U^{2} \partial_x^4 \psi+U \partial_y^2 \psi \,\mathrm{d}x\mathrm{d}y=0,
    \end{equation}
    for all $\psi \in C^{\infty}_{0}(\mathbb{R}^2) $, then the function $u$ given by 
    \begin{equation} \label{eq:weak solution}
        u(t,x,y)=U(x-c(t-t_0),y)
    \end{equation}
    is a weak solution of \eqref{eq:chkp reformulation} for any $t_0 \in I$.
\end{lemma}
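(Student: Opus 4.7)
The plan is to reduce the space-time weak formulation directly to the stationary one via the translation $\xi = x - c(t-t_0)$. Given any test function $\phi \in C_{0}^{\infty}(I \times \mathbb{R}^{2})$, I would introduce the moving-frame test function
$$
\tilde{\phi}(t,\xi,y) := \phi(t,\xi + c(t-t_0),y),
$$
which again belongs to $C_{0}^{\infty}(I \times \mathbb{R}^{2})$ since its support is just the image of $\mathrm{supp}\,\phi$ under an affine map. I would then record the chain-rule identities $\partial_{x}\phi = \partial_{\xi}\tilde{\phi}$, $\partial_{y}\phi = \partial_{y}\tilde{\phi}$ and $\partial_{t}\phi = \partial_{t}\tilde{\phi} - c\,\partial_{\xi}\tilde{\phi}$, all understood after the change of variables, together with $\partial_{x}^{k}\phi = \partial_{\xi}^{k}\tilde{\phi}$ for the higher derivatives that appear in \eqref{def:chkp weak solution}.

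Next I would substitute $u(t,x,y) = U(x-c(t-t_0),y)$ into \eqref{def:chkp weak solution} and perform the change of variables $\xi = x - c(t-t_0)$, whose Jacobian is $1$. Since the operator $L = \partial_{x}(1-\partial_{x}^{2})$ acts purely in the spatial $x$-variable and hence commutes with $\partial_{t}$, the $u\,L(\partial_{t}\phi)$ term splits into $U\,L(\partial_{t}\tilde{\phi})$ and $-cU\,L(\partial_{\xi}\tilde{\phi})$. The first piece integrates to zero in $t$ by the fundamental theorem of calculus, using that $\tilde{\phi}$ has compact $t$-support. What is left is
$$
\iiint_{I \times \mathbb{R}^{2}} \Bigl(-cU L(\partial_{\xi}\tilde{\phi}) + \bigl(\kappa U + \tfrac{3}{2}U^{2} + \tfrac{1}{2}(\partial_{\xi}U)^{2}\bigr)\partial_{\xi}^{2}\tilde{\phi} - \tfrac{1}{2}U^{2}\partial_{\xi}^{4}\tilde{\phi} + U\,\partial_{y}^{2}\tilde{\phi}\Bigr)\,\mathrm{d}t\,\mathrm{d}\xi\,\mathrm{d}y.
$$

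Finally, for each fixed $t\in I$, the slice $\psi_{t}(\xi,y) := \tilde{\phi}(t,\xi,y)$ lies in $C_{0}^{\infty}(\mathbb{R}^{2})$, and the hypothesis \eqref{equation lemma} applied to $\psi = \psi_{t}$ makes the inner $(\xi,y)$-integral vanish identically in $t$. An application of Fubini then produces the required identity, and so $u$ satisfies Definition \ref{def:weak}.

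I do not expect a serious obstacle, only two points worth care. First, the justification of Fubini and of all products being integrable: the derivatives of $\tilde{\phi}$ are bounded and compactly supported while $U,\partial_{\xi}U \in L^{2}(\mathbb{R}^{2})$, so each integrand lies in $L^{1}(I \times \mathbb{R}^{2})$ by Cauchy--Schwarz on the compact $t$-window. Second, a clean bookkeeping of the chain rule so that the high-order derivatives $\partial_{x}^{2}\phi$, $\partial_{x}^{4}\phi$ really do turn into $\partial_{\xi}^{2}\tilde{\phi}$, $\partial_{\xi}^{4}\tilde{\phi}$; this is immediate since the moving frame is only an affine shift in the $x$-direction. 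The structural reason the argument works is the commutativity of $L$ with $\partial_{t}$ together with the compact $t$-support of $\tilde{\phi}$, which together collapse the evolution problem to the stationary equation \eqref{equation lemma}.
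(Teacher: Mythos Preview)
Your proposal is correct and follows essentially the same route as the paper: the paper's shifted test function $\varphi_c(t,x,y):=\varphi(t,x+c(t-t_0),y)$ is precisely your $\tilde{\phi}$, and the chain-rule splitting of $\partial_t$, the vanishing of the pure time-derivative term by compact support, and the slice-by-slice application of \eqref{equation lemma} followed by Fubini all match. The one item you omit is the verification that $u\in C(I,H^{1}(\mathbb{R}^{2}))$, which is part of Definition~\ref{def:weak}; the paper handles this via strong continuity of translations in $H^{1}$, and you should add that sentence.
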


\begin{remark}
	It is worth to mention that the peaked solitary  wave solution \eqref{eq: specific peak solitary solution of CH-KP} and the periodic peaked traveling solution found in \cite{Moon2022} are both covered by the form \eqref{eq:weak solution}.
\end{remark}

\begin{proof}[Proof of Lemma \ref{chkp-lemma}]
    By using the Fourier transform, it is clear that the translation map $a \mapsto U(x+a,y)$ is continuous $\mathbb{R} \rightarrow H^{1}(\mathbb{R}^{2})$ , Since $t \mapsto c(t-t_{0})$ is real analytic, it thus follows that $u$ given by \eqref{eq:weak solution} belongs to $C(I ,H^{1}(\mathbb{R}^{2}))$.
        Recall that the weak solution $u$ satisfies
  \begin{equation*}
		\langle u,L(\varphi_{t}) \rangle+\langle \kappa u +\frac{3}{2}  u^{2} +\frac{1}{2}  (\partial_x u)^{2} ,\partial_x^2 \varphi \rangle -  \langle \frac{1}{2} u^{2} , \partial_x^4\varphi \rangle+ \langle u, \partial_y^2\varphi \rangle=0.
	\end{equation*}
	For any function $\varphi \in C^{\infty}_{0}(I \times \mathbb{R}^2)$, we have that
    \begin{equation}\label{eq:variable substitution c}
        \langle u,\varphi \rangle=\langle U,\varphi_c \rangle, \quad \langle u^2,\varphi \rangle =\langle U^2,\varphi_c \rangle, 
         \quad \langle u_x^{2} ,\varphi \rangle =\langle U_x^{2},\varphi_c \rangle,
    \end{equation}
    where we denote 
$         
\varphi_c(t,x,y): = \varphi(t,x+c(t-t_0),y) .
$
 By direct calculation, we get
    \begin{equation}\label{eq:derivative wrt phi c}
        (\varphi_c)_t = (\varphi_t)_c+ c(\varphi_x)_c, \quad        (\varphi_c)_x = (\varphi_x)_c,\quad  (\varphi_c)_y = (\varphi_y)_c.
    \end{equation}
With  \eqref{eq:variable substitution c} and \eqref{eq:derivative wrt phi c}, we obtain
    \begin{equation*}\label{eq:variable substitution c weak}
        \langle u,L(\varphi_{t})\rangle 
              = \langle U, L(\partial_t \varphi_c -c \partial_x \varphi_c)\rangle,
              \quad  \langle u^2,\partial_x^4\varphi\rangle  = \langle U^2,\partial_x^4\varphi_c \rangle,
                \quad \langle u,\partial_y^2\varphi\rangle= \langle U , \partial_y^2\varphi_c\rangle
    \end{equation*}
    and
    \begin{equation*}\label{eq:variable substitution c weak 1}
        \langle\kappa u +\frac{3}{2}u^2+\frac{1}{2} (\partial_x u)^2,\partial_x^2\varphi\rangle =\langle \kappa U+\frac{3}{2}U^2 +\frac{1}{2}(\partial_x U)^2,\partial_x^2 \varphi_c \rangle.
    \end{equation*}
    Since $U$ is independent of time, we deduce that
        \begin{equation*}
      \langle U, L(\partial_t \varphi_c )\rangle 
        =\iint_{ \mathbb{R}^{2}} U(x,y)L(\varphi_c(T,x,y)-\varphi_c(0,x,y) \, \mathrm{d} x \mathrm{d}y =0
    \end{equation*}
 Collecting the above results, we conclude that
\begin{align*}
    &\langle u,L(\varphi_{t}) \rangle+\langle \kappa u +\frac{3}{2}  u^{2}+\frac{1}{2}  (\partial_x u)^{2}, \partial_x^2\varphi \rangle -  \langle \frac{1}{2} u^{2} , \partial_x^4\varphi \rangle+ \langle u, \partial_y^2\varphi \rangle \nonumber \\
    =&\iiint_{I \times \mathbb{R}^{2} } [-c U(x,y)L(\partial_{x}\varphi_c)+(\kappa U(x,y)+\frac{3}{2}U(x,y)^2\nonumber\\
    &+\frac{1}{2} (\partial_x U(x,y))^2)\partial_x^2\varphi_c-\frac{1}{2}U(x,y)^2\partial_x^4\varphi_c+U(x,y)\partial_y^2\varphi_c] \, \mathrm{d}t\mathrm{d}x\mathrm{d}y \\
    =&0,
\end{align*}
where in the last equality we used  \eqref{equation lemma} by taking $\psi=\varphi$ which is in $ C^{\infty}_{0}$, for each given $t$. The lemma then follows directly.
\end{proof}

\begin{theorem}\label{the:weak chkp}
	Let $u$ be a weak solution of the Camassa-Holm-Kadomtsev-Petviashvili equation \eqref{eq:chkp reformulation} with initial data $u_{0}(x,y)=u(t_{0},x,y)$ such that the equation is locally well-posed \cite{chen3032}. If $u$ is x-symmetric, then it is a traveling wave in the x-direction.
\end{theorem}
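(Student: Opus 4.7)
The plan is to mirror the argument of Theorem \ref{thm:chkp} at the level of the weak formulation \eqref{def:chkp weak solution}, using Lemma \ref{chkp-lemma} to promote a pointwise-in-$t$ steady weak identity into a genuine traveling-wave weak solution and then invoking the local well-posedness of \cite{chen3032} to identify it with $u$. The output wave speed will be $\dot\lambda(t_0)$.

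The central computation is a symmetrization of the weak formulation. For an arbitrary $\phi\in C_0^{\infty}(I\times\mathbb{R}^2)$ I would substitute the reflected test function $\phi^{\ast}(t,x,y):=\phi(t,2\lambda(t)-x,y)$ into \eqref{def:chkp weak solution} and then change variables $x\mapsto 2\lambda(t)-x$ inside the resulting triple integral at fixed $t$. The $x$-symmetry of $u$, which transfers almost everywhere to $u^2$ and $u_x^2$ (by differentiating the symmetry relation in $H^1$), makes every spatial integrand pointwise invariant under the reflection, so the spatial terms collapse to those of \eqref{def:chkp weak solution} with $\phi$ in place of $\phi^{\ast}$. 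The only surviving correction is the time term: the chain rule gives $\phi^{\ast}_t=\phi_t(t,2\lambda-x,y)+2\dot\lambda\,\phi_x(t,2\lambda-x,y)$, from which a direct calculation produces $L(\phi^{\ast}_t)=-L(\phi_t)-2\dot\lambda\,\partial_x L\phi$ once the reflection is undone. Adding the rewritten identity to \eqref{def:chkp weak solution} the $uL\phi_t$ contributions cancel, leaving
\begin{equation*}
	\iiint_{I\times\mathbb{R}^2}\left[-\dot\lambda\,u\,\partial_x L\phi+\Bigl(\kappa u+\tfrac{3}{2}u^2+\tfrac{1}{2}u_x^2\Bigr)\phi_{xx}-\tfrac{1}{2}u^2\phi_{xxxx}+u\phi_{yy}\right]\mathrm{d}t\,\mathrm{d}x\,\mathrm{d}y=0
\end{equation*}
valid for every admissible $\phi$.

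To pass to a pointwise-in-$t$ statement I would take $\phi(t,x,y)=\chi(t)\psi(x,y)$ with $\chi\in C_0^{\infty}(I)$ and $\psi\in C_0^{\infty}(\mathbb{R}^2)$; the arbitrariness of $\chi$ combined with the continuity of $u$ in $C(I,H^{1}(\mathbb{R}^2))$ and of $\dot\lambda$ forces the slice $U(x,y):=u(t,x,y)$ to satisfy \eqref{equation lemma} with $c:=\dot\lambda(t)$ at every $t\in I$ (using $\partial_x L\psi=L\partial_x\psi$). Taking $t=t_0$ with $U_0:=u_0$, Lemma \ref{chkp-lemma} supplies the weak solution $v(t,x,y):=u_0\bigl(x-\dot\lambda(t_0)(t-t_0),y\bigr)$, which coincides with $u$ at $t=t_0$; local well-posedness then yields $u\equiv v$ on $I$, so $u$ is a traveling wave in the $x$-direction with speed $\dot\lambda(t_0)$.

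The main technical obstacle is the chain-rule bookkeeping that identifies $L(\phi^{\ast}_t)$, together with the admissibility of $\phi^{\ast}$ as a test function: since $\lambda$ is only $C^1$ the function $\phi^{\ast}$ is only $C^1$ in $t$, but \eqref{def:chkp weak solution} requires at most one time derivative of the test function, so this regularity already suffices. If one prefers to remain strictly inside $C_0^{\infty}$, a brief mollification $\lambda\mapsto\lambda_\varepsilon\in C^{\infty}$ followed by the passage $\varepsilon\to 0$, permissible because $u\in C(I,H^{1})$, removes the worry.
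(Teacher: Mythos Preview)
Your proposal is correct and follows essentially the same strategy as the paper: reflect the test function via $\phi\mapsto\phi(t,2\lambda(t)-x,y)$, use the $x$-symmetry of $u$ to reduce the weak identity to the steady form \eqref{equation lemma} with $c=\dot\lambda$, then apply Lemma~\ref{chkp-lemma} and uniqueness. The only cosmetic differences are that the paper localizes in time with a mollifier $\rho_\varepsilon\to\delta_{t_0}$ rather than your separable $\chi(t)\psi(x,y)$, and it accommodates the $C^1$-in-$t$ regularity of the reflected test function by working in $C^1_0(I,C^\infty_0(\mathbb{R}^2))$ rather than by mollifying $\lambda$.
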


\begin{proof}
Consider test functions $\phi \in C^{1}_{0}(I,C^{\infty}_{0}(\mathbb{R}^{2}))$.

Firstly, for $\lambda(t) \in C^1(I)$, we will  prove the transform $T_{\lambda}:\phi \rightarrow \phi_{\lambda}=\phi(t,2\lambda(t)-x,y)$ is a bijection in $C^{1}_{0}(I,C^{\infty}_{0}(\mathbb{R}^{2}))$.
Since we have $T_{\lambda}(\phi_{\lambda})=\phi(t,2\lambda(t)-(2\lambda(t)-x),y)=\phi(t,x,y)=\phi$, which indicates that $T_{\lambda}$ maps the space $C^{1}_{0}(I,C^{\infty}_{0}(\mathbb{R}^{2}))$ onto the same space for $ \lambda \in C^{1}(I)$.
Moreover, for any $\phi \in C^{1}_{0}(I,C^{\infty}_{0}(\mathbb{R}^{2}))$, there exists $\Tilde{\phi}=\phi_{\lambda}=\phi(t,2\lambda-x,y)$ such that $T_{\lambda} \Tilde{\phi}=\phi$ This demonstrates the surjectivity of $T_{\lambda}$.
Additionally, if $T_{\lambda}(\phi)=T_{\lambda}(\psi)$ for $\phi,\psi \in C^{1}_{0}(I,C^{\infty}_{0}(\mathbb{R}^{2}))$, $\mathrm{ i.e. } \, \phi_{\lambda}=\psi_{\lambda}$, then $T_{\lambda}(\phi_{\lambda})=T_{\lambda}(\psi_{\lambda}) \Rightarrow \phi=\psi$. This confirms the injectivity of $T_{\lambda}$, concluding it as a bijection.

% For $\lambda(t) \in C^1(\mathbb{R})$, it is clear the transform $\phi \mapsto \phi_{\lambda}=\phi(t,2\lambda(t)-x,y)$ is a bijection \textcolor{green}{as we have $(\phi_{\lambda})_{\lambda}=\phi$} in $C^{1}_{0}(I,C^{\infty}_{0}(\mathbb{R}^{2}))$.
Let $u$ be an $x$-symmetric solution of  \eqref{eq:chkp reformulation} in the sense of Definition \ref{def:x-symmetry}, namely $u=u_{\lambda}$. It is clear that	
\begin{equation}\label{eq:derivative wrt symmetry}
\langle u_{\lambda},\phi \rangle= \langle u, \phi_{\lambda} \rangle, \quad \langle \partial_x(u_{\lambda}),\phi \rangle= -\langle \partial_x u, \phi_{\lambda} \rangle.
\end{equation}
%	\begin{equation}\label{eq:inner chkp}
%		\langle u,L(\phi_{t}) \rangle+\langle \kappa u +\frac{3}{2}  u^{2}-\frac{5}{2}  u^{2}_{x} , \phi_{xx} \rangle +  \langle \frac{1}{2} u^{2} , \phi_{xxxx} \rangle+ \langle u, \phi_{yy} \rangle=0.
%	\end{equation}
Using \eqref{eq:derivative wrt symmetry} and inserting $u$ into the weak solution formulation \eqref{def:chkp weak solution}, we get
	\begin{equation}\label{eq:symmetry weak solution phi-lambda}
		0=\langle u , (L(\phi_t))_{\lambda} \rangle + \langle \kappa u +\frac{3}{2}  u^{2} +\frac{1}{2}   (\partial_x u)^{2} , (\partial_{x}^{2}\phi)_{\lambda} \rangle -  \langle \frac{1}{2} u^{2} , (\partial_{x}^{4}\phi)_{\lambda} \rangle+ \langle u, \partial_{y}^{2}(\phi_{\lambda}) \rangle  .
	\end{equation}
	Note that 
	\begin{equation}
			(\phi_{\lambda})_{t}=(\phi_{t})_{\lambda}-2 \dot{\lambda} (\phi_{x})_{\lambda},\quad (\phi_{\lambda})_{x}=-(\phi_{x})_{\lambda}, \quad (\phi_{\lambda})_{y}=(\phi_{y})_{\lambda}  \nonumber
	\end{equation}
	where $\dot{\lambda}$ denotes the time derivative  of $\lambda$.
 We then rewrite \eqref{eq:symmetry weak solution phi-lambda}
 as 
\begin{align}\label{eq:symmetry weak solution phi-lambda 2}
 0&=\langle u , -\partial_t(L(\phi_{\lambda})) + 2 \dot{\lambda} \partial_x (L(\phi_{\lambda})) \rangle + \langle  \kappa u +\frac{3}{2}  u^{2}+\frac{1}{2}  (\partial_x  u)^{2} , \partial_x^2\phi_{\lambda} \rangle  \nonumber \\
		&-  \langle \frac{1}{2} u^{2} , \partial_x^4\phi_{\lambda} \rangle+ \langle u, \partial_y^2\phi_{\lambda}  \rangle  .
 \end{align}
	Then, since $\phi \rightarrow \phi_{\lambda}$ is a bijection in $ C^{1}_{0}(I,C^{\infty}_{0}(\mathbb{R}^{2}))$ and  $(\phi_{\lambda})_{\lambda}=\phi$, we conclude that \eqref{eq:symmetry weak solution phi-lambda 2}
 actually
 holds with   $\phi_{\lambda}$ replaced by $\phi$, namely	
 \begin{equation}\label{eq:equation with symmetry}
		\langle u , - L (\phi_{t})+2 \dot{\lambda} \partial_x(L \phi) \rangle + \langle  \kappa u +\frac{3}{2}  u^{2} +\frac{1}{2}  (\partial_x u)^{2} , \partial_x^2 \phi \rangle - \langle \frac{1}{2} u^{2} , \partial_x^4 \phi \rangle+ \langle u, \partial_y^2 \phi  \rangle=0.
	\end{equation}
	Comparing \eqref{def:chkp weak solution} with \eqref{eq:equation with symmetry}, we get
	\begin{equation}\label{eq:inner chkp-finally}
		\langle u, \dot{\lambda} \partial_x(L \phi) \rangle + \langle  \kappa u +\frac{3}{2}  u^{2} +\frac{1}{2}  (\partial_x u)^{2}, \partial_x^2\phi \rangle -  \langle \frac{1}{2} u^{2} ,\partial_x^4 \phi \rangle+ \langle u, \partial_y^2\phi \rangle  =0 .
	\end{equation}
	We now fix a  $t_{0} \in I$. For any $\psi \in C^{\infty}_{0}(\mathbb{R}^{2})$, we consider the sequence of functions $\phi_{\varepsilon}(t,x,y) = \psi(x,y) \rho_{\varepsilon}(t)$, where $\rho_{\varepsilon} \in C^{\infty}_{0}(I)$ is a mollifier such that $\rho_{\varepsilon}(t) \rightarrow \delta(t-t_{0})$, the  Dirac distribution at $t_{0}$ as $\varepsilon \rightarrow 0$.
	Replacing $\phi$ by $\phi_{\varepsilon}(t,x,y)$ in \eqref{eq:inner chkp-finally} we obtain 
	\begin{align}\label{eq:chkp with sequence}
		&\iint_{ \mathbb{R} \times \mathbb{R}} \partial_x(L \psi) \int_{I}  \dot{\lambda} u \rho_{\varepsilon}(t)\, \mathrm{d}t\mathrm{d}x\mathrm{d}y +	\iint_{ \mathbb{R} \times \mathbb{R}} \partial_x^2\psi  \int_{I}  (\kappa u +\frac{3}{2}  u^{2} +\frac{1}{2}  (\partial_x u)^{2})\rho_{\varepsilon}(t)\, \mathrm{d}t\mathrm{d}x\mathrm{d}y \nonumber \\
		&-\iint_{ \mathbb{R} \times \mathbb{R}}  \partial_x^4\psi  \int_{I} \frac{1}{2} u^{2} \rho_{\varepsilon}(t) \, \mathrm{d}t\mathrm{d}x\mathrm{d}y+\iint_{ \mathbb{R} \times \mathbb{R}}  \partial_y^2\psi\int_{I}  u \rho_{\varepsilon}(t) \, \mathrm{d}t\mathrm{d}x\mathrm{d}y
		 =0
	\end{align}
	Since  $u \in C(I,H^{1}(\mathbb{R}^{2}))$ is a weak solution, it is clear that  
	\begin{equation*}
		\lim_{\varepsilon \to 0} \int_{I} u(t,x,y) \rho_{\varepsilon}(t) \, \mathrm{d}t = u(t_{0},x,y), \quad \mathrm{in} \quad L^{2}(\mathbb{R}^{2})
	\end{equation*}
	and
	\begin{align*}
		&\lim_{\varepsilon \to 0} \int_{I} u^{2}_{x}(t,x,y) \rho_{\varepsilon}(t) \, \mathrm{d}t = u^{2}_{x}(t_{0},x,y)  \quad \mathrm{in} \quad L^{1}(\mathbb{R}^{2}),
 \\
		&\lim_{\varepsilon \to 0} \int_{I} u^{2}(t,x,y) \rho_{\varepsilon}(t) \, \mathrm{d} t = u^{2}(t_{0},x,y)  \quad \mathrm{in} \quad L^{1}(\mathbb{R}^{2}).
	\end{align*}
As $\varepsilon$ tends to zero,  we deduce from \eqref{eq:chkp with sequence} that $u(t_{0},x,y)$ satisfies Lemma \ref{chkp-lemma} for $c=\dot{\lambda}(t_{0}) $ so that $\overline{u}(t,x,y):=u(t_{0},x-\dot{\lambda}(t_{0})(t-t_{0}),y)$ is a traveling wave solution in the $x$-direction. Since $\overline{u}(t_{0},x,y)=u(t_{0},x,y)$, the  uniqueness of solution implies that $\overline{u}(t,x,y)=u(t,x,y)$ for all time $t$. Therefore, $u$ is a traveling wave solution in the $x$-direction  with speed $\dot{\lambda}(t_{0})$.

\end{proof}

We now turn to the hyperelastic compressible plate model \eqref{eq:hcp} and consider the steadiness of weak  solutions with symmetry in the horizontal direction. As before, we first introduce the weak formulation of solutions.

\medskip

\begin{definition}\label{def:hcp weak solution}
	A function $u \in C(I,H^{1}(\mathbb{R}^{2}))$ is called a weak solution of \eqref{eq:hcp} if $u $ satisfies
	\begin{equation}
		\iiint_{I \times \mathbb{R}^{2}} u L(\partial_{t} \phi) +(\frac{3}{2} u^{2}+\frac{1}{2} \gamma (\partial_x u)^{2}) \partial^{2}_{x} \phi -  \frac{\gamma}{2} u^{2} \partial^{4}_{x} \phi- \alpha u \partial^{2}_{y} \phi+\beta u  \partial^{2}_{x} \partial^{2}_{y} \phi  \,  \mathrm{d}t\mathrm{d} x\mathrm{d} y=0,
	\end{equation}
	for all $\phi \in C^{\infty}_{0}(I \times \mathbb{R}^{2} )$.
\end{definition}

\medskip

As before, we also need to characterize steady solutions in the weak formulation, which is included in following lemma.

\medskip

\begin{lemma}\label{lem:hcp weak traveling solution}
	If $U\in  H^{1}(\mathbb{R}^{2}) $ satisfies
	\begin{equation}
		\iint_{\mathbb{R}^{2}} -c U L( \partial_{x} \psi) +( \frac{3}{2} U^{2}+\frac{1}{2} \gamma (\partial_x U)^{2}) \partial^{2}_{x} \psi - \frac{\gamma}{2} U^{2} \partial^{4}_{x} \psi- \alpha U \partial^{2}_{y} \psi+\beta U  \partial^{2}_{x} \partial^{2}_{y}  \psi \,  \mathrm{d}  x \mathrm{d} y=0,
	\end{equation}
	for all $\psi \in C^{\infty}_{0}(\mathbb{R}^{2}) $, then $u$ given by 
	\begin{equation}
		u(t,x,y)=U(x-c(t-t_0),y)
	\end{equation}
	is a weak solution of \eqref{eq:hcp} for any $t_0 \in I$.
\end{lemma}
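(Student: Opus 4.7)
The plan is to mirror the proof of Lemma~\ref{chkp-lemma}, since the additional terms $-\alpha u\,\partial_y^2\phi$ and $\beta u\,\partial_x^2\partial_y^2\phi$ that appear in the weak formulation of \eqref{eq:hcp} only involve spatial derivatives and therefore behave trivially under a pure horizontal translation. First I would check that the candidate $u(t,x,y)=U(x-c(t-t_0),y)$ lies in $C(I,H^{1}(\mathbb{R}^{2}))$: this follows from the continuity of the translation map $a\mapsto U(\,\cdot\,+a,\,\cdot\,)$ from $\mathbb{R}$ into $H^{1}(\mathbb{R}^{2})$, obtained via Plancherel and dominated convergence on the Fourier side, together with the real analyticity of $t\mapsto c(t-t_0)$.

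Next, for any $\varphi\in C_{0}^{\infty}(I\times\mathbb{R}^{2})$ I would introduce the shifted test function $\varphi_c(t,x,y):=\varphi(t,x+c(t-t_0),y)$, which again lies in $C_{0}^{\infty}(I\times\mathbb{R}^{2})$. Performing the change of variable $x\mapsto x+c(t-t_0)$ in each term yields the identities $\langle u,\varphi\rangle=\langle U,\varphi_c\rangle$, $\langle u^{2},\varphi\rangle=\langle U^{2},\varphi_c\rangle$ and $\langle(\partial_x u)^{2},\varphi\rangle=\langle(\partial_x U)^{2},\varphi_c\rangle$, while the chain rule produces $(\varphi_c)_t=(\varphi_t)_c+c(\varphi_x)_c$, $(\varphi_c)_x=(\varphi_x)_c$ and $(\varphi_c)_y=(\varphi_y)_c$. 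Since $U$ is time-independent and $\varphi_c$ has compact support in $t$, the time-derivative contribution collapses exactly as in Lemma~\ref{chkp-lemma}, namely
\[\langle U,L(\partial_t\varphi_c)\rangle=\iint_{\mathbb{R}^{2}} U(x,y)\,L\bigl(\varphi_c(T,x,y)-\varphi_c(0,x,y)\bigr)\,\mathrm{d}x\mathrm{d}y=0.\]

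Substituting all of these relations into Definition~\ref{def:hcp weak solution} reorganises the full expression into a single triple integral against $\varphi_c$ of precisely the combination appearing in the stationary hypothesis on $U$, with the traveling speed $c$ arising from the $c(\varphi_x)_c$ piece of $(\varphi_t)_c$ acting on the first factor $\langle u,L(\varphi_t)\rangle$. For each fixed $t\in I$ the slice $\psi(x,y):=\varphi_c(t,x,y)$ lies in $C_{0}^{\infty}(\mathbb{R}^{2})$, so the hypothesis on $U$ forces the inner planar integrand to vanish pointwise in $t$; a final integration in $t$ then gives the weak formulation. The only genuine obstacle is careful bookkeeping: tracking the signs and the coefficients $\gamma$, $\alpha$, $\beta$ through the change of variables, and in particular confirming that the new term $\beta u\,\partial_x^{2}\partial_y^{2}\phi$ transports to $\beta U\,\partial_x^{2}\partial_y^{2}\varphi_c$ without spurious contributions — which it does because $\partial_x^{2}\partial_y^{2}$ commutes with translation in the $x$-variable.
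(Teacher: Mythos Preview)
Your proposal is correct and follows exactly the approach the paper intends: the paper omits the proof of Lemma~\ref{lem:hcp weak traveling solution} precisely because it is a verbatim repetition of the proof of Lemma~\ref{chkp-lemma}, with the only change being the additional purely spatial terms $-\alpha u\,\partial_y^2\phi$ and $\beta u\,\partial_x^2\partial_y^2\phi$, which, as you note, transport unchanged under horizontal translation.
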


\medskip

With Definition \ref{def:hcp weak solution} and Lemma \ref{lem:hcp weak traveling solution}, we can follow the above procedure for the CH-KP equation and similarly prove that weak solutions with symmetry  is actually steady in the horizontal direction. We will just state the result below and omit the proof.
\medskip

\begin{theorem}\label{thm: hcp steadiness of x symmetric weak solutions}
	Let $u$ be a weak solution of the hyperelastic compressible plate model \eqref{eq:hcp} with initial data $u_{0}(x,y)=u(t_{0},x,y)$ such that the model is locally well-posed \cite{chen3032}. If $u$ is x-symmetric, then it is a traveling wave in the x-direction.
\end{theorem}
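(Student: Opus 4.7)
The plan is to mirror the argument used for Theorem \ref{the:weak chkp}, exploiting the bijection $T_{\lambda}\colon \phi \mapsto \phi_{\lambda}$ on $C^{1}_{0}(I,C^{\infty}_{0}(\mathbb{R}^{2}))$ already established there, together with the pairing identity $\langle u_{\lambda},f\rangle = \langle u,f_{\lambda}\rangle$ valid on $I \times \mathbb{R}^{2}$. Because $u$ is $x$-symmetric, I may substitute $u = u_{\lambda}$ inside the weak formulation of Definition \ref{def:hcp weak solution} and transfer the reflection from $u$ to the test function. The first step is to record how each derivative acting on $\phi$ behaves under the reflection: $(\phi_{\lambda})_{x} = -(\phi_{x})_{\lambda}$ so that all \emph{even-order} $x$-derivatives are preserved, $(\phi_{\lambda})_{y}=(\phi_{y})_{\lambda}$, and $(\phi_{\lambda})_{t} = (\phi_{t})_{\lambda} \pm 2\dot{\lambda}(\phi_{x})_{\lambda}$. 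The crucial observation for the hyperelastic model is that the new mixed term $\beta u\,\partial_{x}^{2}\partial_{y}^{2}\phi$ contains an even number of $x$-derivatives, hence is invariant under the reflection; the only contribution that produces a genuinely new term comes, as in the CH-KP case, from the $L(\partial_{t}\phi)$ piece, which contributes the extra factor $2\dot{\lambda}\,\partial_{x}L\phi_{\lambda}$.

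Next, after rewriting the transformed identity in terms of $\phi_{\lambda}$ and invoking the bijectivity of $T_{\lambda}$ to replace $\phi_{\lambda}$ by an arbitrary $\phi \in C^{1}_{0}(I,C^{\infty}_{0}(\mathbb{R}^{2}))$, subtraction from the original weak formulation of \eqref{eq:hcp} cancels the $\partial_{t}$-contributions and produces the reduced identity
\begin{align*}
\langle u,\dot{\lambda}\,\partial_{x}(L\phi)\rangle &+ \langle \tfrac{3}{2}u^{2}+\tfrac{1}{2}\gamma(\partial_{x}u)^{2},\,\partial_{x}^{2}\phi\rangle - \langle \tfrac{\gamma}{2}u^{2},\,\partial_{x}^{4}\phi\rangle \\
&- \alpha\langle u,\,\partial_{y}^{2}\phi\rangle + \beta\langle u,\,\partial_{x}^{2}\partial_{y}^{2}\phi\rangle = 0,
\end{align*}
which is valid for every admissible $\phi$. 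Specializing to tensor products $\phi_{\varepsilon}(t,x,y) = \psi(x,y)\rho_{\varepsilon}(t)$ with $\rho_{\varepsilon}$ a mollifier converging to $\delta(t-t_{0})$ and sending $\varepsilon \to 0$, the continuity in time of $u$, $u^{2}$ and $(\partial_{x}u)^{2}$ (ensured by $u \in C(I,H^{1}(\mathbb{R}^{2}))$, which gives $L^{2}$ convergence of $u$ and $L^{1}$ convergence of the quadratic terms) yields the stationary identity in Lemma \ref{lem:hcp weak traveling solution} with $U(x,y) := u(t_{0},x,y)$ and speed $c := \dot{\lambda}(t_{0})$.

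Lemma \ref{lem:hcp weak traveling solution} then provides the weak solution $\bar{u}(t,x,y) = U(x-\dot{\lambda}(t_{0})(t-t_{0}),y)$ of \eqref{eq:hcp}; since $\bar{u}$ and $u$ agree at $t_{0}$ and \eqref{eq:hcp} is locally well-posed at the assumed regularity, uniqueness forces $\bar{u} \equiv u$, giving the stated traveling-wave conclusion with speed $\dot{\lambda}(t_{0})$. I expect the main obstacle to be bookkeeping in the reflection identities for the higher-order mixed term $\beta u\,\partial_{x}^{2}\partial_{y}^{2}\phi$, and more substantively, ensuring that the mollification-in-time argument passes cleanly in $L^{1}_{\mathrm{loc}}$ for the nonlinear quadratic expressions $u^{2}$ and $(\partial_{x}u)^{2}$; however, both obstacles are handled by the $H^{1}$ embedding exactly as in the proof of Theorem \ref{the:weak chkp}, so no new analytical difficulty arises beyond the one already resolved for the CH-KP case.
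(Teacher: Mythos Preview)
Your proposal is correct and follows precisely the route the paper intends: the paper explicitly omits the proof of this theorem, stating that ``we can follow the above procedure for the CH-KP equation and similarly prove'' the result, and your argument does exactly that---transferring the reflection to the test function via the bijection $T_{\lambda}$, noting that the additional mixed term $\beta u\,\partial_{x}^{2}\partial_{y}^{2}\phi$ is even in $\partial_{x}$ and hence invariant, deriving the reduced identity, mollifying in time to reach Lemma~\ref{lem:hcp weak traveling solution}, and concluding by uniqueness. No deviation from the paper's approach is present.
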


%\bmhead{Supplementary information}
%
%If your article has accompanying supplementary file/s please state so here. 
%
%Authors reporting data from electrophoretic gels and blots should supply the full unprocessed scans for key as part of their Supplementary information. This may be requested by the editorial team/s if it is missing.
%
%Please refer to Journal-level guidance for any specific requirements.

\bmhead{Acknowledgments}
The authors are very grateful to the referee for the valuable comments and suggestions, which lead to the current version of the manuscript. The authors would like to thank Dr. Lu Li for the many discussions on the ideas and proofs during the preparation of the initial version, and for her careful proofreading before the submission.The author L.P. gratefully acknowledges financial support from the National Natural Science Foundation for Young Scientists of China (Grant
No. 12001553), the Fundamental Research Funds for the Central Universities (Grant No. 20lgpy151), the Science and Technology Program of Guangzhou (Grant No. 202102080474) and the Guangdong Basic, and Applied Basic Research Foundation (Grant No. 2023A1515010599).

\section*{Declarations}

 \textbf{Conflict of interest} 
	The authors declare that they have no conflict of interest.\\

\noindent\textbf{Associated data} Data sharing is not applicable to this article as no datasets were generated or analysed
during the current study.\\
	
\noindent Publisher's Note Springer Nature remains neutral with regard to jurisdictional claims in published maps
and institutional affiliations.\\

\noindent Springer Nature or its licensor (e.g. a society or other partner) holds exclusive rights to this article
under a publishing agreement with the author(s) or other rightsholder(s); author self-archiving of the
accepted manuscript version of this article is solely governed by the terms of such publishing agreement
and applicable law.

\end{document}